\documentclass[letterpaper, 10 pt, conference]{ieeeconf}
\IEEEoverridecommandlockouts
\usepackage{cite}
\usepackage{url}
\usepackage{amsmath,amssymb,amsfonts}
\usepackage{multirow}
\usepackage{subfigure}

\usepackage{graphicx}
\usepackage{textcomp}
\usepackage{epsfig}

\usepackage{epstopdf}
\usepackage[ruled]{algorithm}
\usepackage{algorithmic}
\usepackage{enumerate}
\usepackage{float}
\usepackage{graphics,graphicx}
\usepackage{subfigure}

\usepackage{array}
\usepackage{mathtools}
\allowdisplaybreaks[3]
\usepackage{url}
\DeclareGraphicsExtensions{.png}
\graphicspath{{./Pics/}}
\usepackage{color}
\usepackage{xspace}

\usepackage{hyperref}
\hypersetup{
    colorlinks=true,
    linkcolor=blue,
    filecolor=gray,      
    urlcolor=blue,
    citecolor=blue,
}

\def\BibTeX{{\rm B\kern-.05em{\sc i\kern-.025em b}\kern-.08em
		T\kern-.1667em\lower.7ex\hbox{E}\kern-.125emX}}

\newtheorem{lemma}{Lemma}
\newtheorem{theorem}{Theorem}
\newtheorem{definition}{Definition}

\newtheorem{assumption}{Assumption}

\usepackage{bbm}
\renewcommand{\theequation}{\arabic{equation}}

\begin{document}

\title{\LARGE \bf Communication-Efficient Differentially Private Gradient Tracking for Distributed Optimization via Local Updates}

\author{
Mihitha Maithripala, Chenyang Qiu, and
Zongli Lin
\thanks{Work supported in part by the US National Science Foundation under the grant number CMMI–2243930.}
\thanks{M. Maithripala, C. Qiu, and Z. Lin are with the Charles L. Brown Department of Electrical and Computer Engineering, University of Virginia, Charlottesville, VA 22904, USA.}}

\maketitle
\pagestyle{plain}
\thispagestyle{empty}

\begin{abstract}
This paper studies privacy-preserving distributed optimization using gradient tracking with one communication-free local update between consecutive communication rounds. Since local computation alone does not protect gradient information, we perturb both the primal state and the gradient-tracking direction at communication iterations while keeping local updates noise-free. We establish an aggregate variable for tracking, characterize the limiting consensus point in the presence of added random noise, and prove convergence in mean under certain parameter conditions. We also establish infinite-horizon pure differential privacy for the complete communication transcript under an affine objective adjacency relation and geometrically decaying Laplace perturbations. Simulation results illustrate the advantages of the proposed method in terms of the convergence and optimization accuracy given a privacy budget.

\end{abstract}

\section{Introduction}
\label{sec:introduction}

Distributed optimization allows a network of agents to minimize an aggregate objective through local computation and information exchange with neighboring agents. It provides a natural framework for coordinating networked energy systems~\cite{xu2018dual,liu2024scalable}, processing distributed sensor measurements~\cite{zhu2018mitigating}, and learning from decentralized data~\cite{tsianos2012consensus}. Gradient-tracking methods, including DIGing, combine local descent with an auxiliary variable that tracks aggregate gradient information and achieve geometric convergence under suitable smoothness, strong-convexity, and connectivity conditions~\cite{nedic2017geometric,qu2017harnessing,pu2021distributed}. However, their implementation raises two practical concerns: frequent communication can be costly, and the exchanged states and tracking variables can disclose information about local objectives.

Motivated by the success of local updates in federated learning~\cite{reisizadeh2020fedpaq}, distributed optimization algorithms incorporating local updates~\cite{liu2026guaranteeing,nguyen2023performance, wu2025effectiveness} have been proposed. These algorithms reduce the frequency of information exchange by allowing agents to perform additional local computation between communication rounds. Recent work~\cite{wang2026localupdates} investigated local updates in gradient-tracking based distributed optimization.  Their analysis shows that, with appropriately selected stepsizes, local updates can improve optimization performance per communication round. In particular, their analysis identifies a regime in which one communication-free local update between consecutive communication rounds achieves performance improvement, while repeating the local updates provide no further gain. These findings motivate a simple alternating structure: a communication-free local update followed by an update involving network mixing.

However, whether this communication structure preserves the privacy of sensitive local information remains to be established. This question is important because exchanged states and gradients may reveal sensitive information about participating agents, such as robot positions in multi-robot rendezvous, sensor locations in distributed information fusion, or private training data in distributed learning~\cite{xuan2023gradienttracking}. Indeed, as we will show in this paper, under the stated observation and initialization assumptions, an observer with information of the complete communication sequence can recursively reconstruct the uncommunicated intermediate states and the entire local gradient trajectory of the gradient-tracking method in~\cite{wang2026localupdates} with one local update between consecutive communication rounds. Thus, communication-free local updates alone do not protect gradient information, motivating an explicit privacy-preserving mechanism.

In the absence of local-update mechanisms, various approaches have been developed to preserve privacy in distributed computation and optimization. Cryptographic methods, including homomorphic encryption~\cite{zhang2018enabling} and secure multiparty computation~\cite{li2026randomized}, enable computations on protected data, at the expense of additional cryptographic operations and communication. Another line of work uses structured perturbations to conceal private information while preserving the accuracy~\cite{gao2022algorithm, maithripala2026privacy, gade2018private}. However, privacy guarantees based on structured perturbations often rely on restrictions on the adversary’s access to information. In particular, protecting an agent may require at least one of its neighbors to remain non-colluding, a condition that can be difficult to ensure in general multi-agent systems. Differential privacy (DP) offers a widely adopted framework for quantifying privacy protection through independent additive noises~\cite{chen2023differentially,xuan2023gradienttracking,ding2022state}. The noise must be carefully calibrated to control the cumulative privacy loss while maintaining optimization accuracy.

Incorporating DP into gradient tracking with local updates offers a promising approach to improving communication efficiency while protecting private information. An additional local update allows further computation without injecting fresh noise, potentially improving optimization performance per communication round under the same privacy budget. However, this update changes both the convergence dynamics and the sensitivity of communicated information, requiring a separate convergence and privacy analysis. To address this challenge, we propose in this paper a differentially private gradient-tracking algorithm with one local update between consecutive communication rounds. The algorithm perturbs both the primal state and the tracking direction only at communication iterations. To the best of our knowledge, this is the first work to establish both convergence and infinite-horizon pure differential privacy for this alternating local-update gradient-tracking scheme under the stated objective adjacency relation.

Our main contributions are summarized as follows.
\begin{enumerate}
\item We show that communication-free local updates alone do not protect gradient information. Specially, an attacker who knows the mixing weights and stepsize and observes the primal and tracking states at each communication iteration can uniquely reconstruct all uncommunicated intermediate states and every agent’s complete local gradient trajectory.

\item We develop a differentially private gradient-tracking algorithm that perturbs both the primal state and the gradient-tracking direction only during communication, while performing local updates without fresh noise injection. We establish sufficient conditions for convergence to a common random limiting point and characterize this point through the accumulated communication perturbations.

\item We derive a matched-transcript sensitivity recursion and an explicit infinite-horizon pure differential-privacy budget under the stated affine adjacency. Joint conditions on the stepsize and geometric noise-decay rates ensure convergence and privacy. 
\item Numerical comparisons demonstrate improved convergence per communication round relative to an existing differentially private method under equal certified privacy budgets in the tested setting.
\end{enumerate}

\section{Problem Formulation and Preliminaries}\label{sec:problem}
Consider a network of $n$ agents communicating over a connected undirected graph $\mathcal{G}=(\mathcal{V},\mathcal{E})$, where $\mathcal{V}=\{1,2, \ldots,n\}$. Agent $i$ knows only its local objective function $f_i:\mathbb{R}^d\to\mathbb{R}$. The agents cooperatively solve
\begin{equation}\label{eq:problem}
    \min_{x\in\mathbb{R}^d} \;\; f(x):=\frac{1}{n}\sum_{i=1}^n f_i(x).
\end{equation}
Each agent maintains a local copy $x_i\in\mathbb{R}^d$.

\begin{assumption}\label{ass:function}
For each agent $i$, the local objective $f_i:\mathbb{R}^d\rightarrow\mathbb{R}$ is $\mu$-strongly convex and its gradient is $L$-Lipschitz continuous. In particular, for any $x,x'\in\mathbb{R}^d$,
$
\bigl\langle
\nabla f_i(x)-\nabla f_i(x'),
\,x-x'
\bigr\rangle
\ge
\mu \|x-x'\|^2,
$
and
$
\|\nabla f_i(x)-\nabla f_i(x')\|
\le
L\|x-x'\|.
$
\end{assumption}

\begin{assumption}\label{ass:mixing}
The mixing matrix $W=[w_{ij}]\in\mathbb{R}^{n\times n}$ is symmetric and doubly stochastic, i.e.,
$
W\mathbf{1}=\mathbf{1},
\;
\mathbf{1}^{\mathrm T}W=\mathbf{1}^{\mathrm T}.
$
The graph induced by $W$ is connected.

Let $J:=\mathbf1\mathbf1^{\rm T}/n$, $\lambda:=\|W-J\|_2<1$, and $\sigma:=\|W-I\|_2$. 
\end{assumption}

\subsection{Differential Privacy}

We first define adjacency between problem instances and
then state the differential-privacy requirement.

\begin{definition}\label{def:privacy_adjacency}
Two problem instances $\mathcal P$ and $\mathcal P'$ are adjacent at agent $i$ if they have the same graph, algorithmic parameters, and initial primal states, and
\[
f_j'=f_j \quad (j\ne i),
\qquad
f_i'(x)=f_i(x)+a_i^{\rm T}x+b_i,
\] 
where $a_i\in\mathbb R^d$, $b_i\in\mathbb R$, and $\|a_i\|_2\le\delta_i$ for a prescribed $\delta_i>0$. 
\end{definition}

\begin{definition}\label{def:dp}
Let $\mathcal M(\mathcal P)$ denote the complete communication transcript generated by a randomized distributed algorithm on problem instance $\mathcal P$. The algorithm is $\epsilon$-differentially private if, for every pair of adjacent instances $\mathcal P,\mathcal P'$ and every measurable set $\mathcal O$ of communication transcripts,
\[
\Pr[\mathcal M(\mathcal P)\in\mathcal O]
\le e^\epsilon
\Pr[\mathcal M(\mathcal P')\in\mathcal O],
\]
where the probabilities are taken over the algorithm's random perturbations. $\epsilon$-differential privacy ensures that an adversary can hardly identify a local objective function among all possible ones, even when all the observation sequences are disclosed. A smaller $\epsilon$ implies a higher privacy level.
\end{definition}

\section{Privacy Vulnerability of Local-Update Gradient Tracking}

Local updates have recently been incorporated into DIGing to reduce the communication required for distributed optimization. Reference \cite{wang2026localupdates} showed that, with an appropriately selected stepsize, the maximal improvement is achieved when there is exactly one communication-free local update between two consecutive communication rounds. 

Accordingly, we consider one communication-free local update between two consecutive communication rounds. Define the stacked vectors
$x^k := [(x_1^k)^{\rm T}\; (x_2^k)^{\rm T}\ldots(x_n^k)^{\rm T}]^{\rm T}$,
$y^k := [(y_1^k)^{\rm T}\; (y_2^k)^{\rm T}\ldots(y_n^k)^{\rm T}]^{\rm T}$,
and
$g^k :=
[(\nabla f_1(x_1^k))^{\rm T}$ $(\nabla f_2(x_2^k))^{\rm T}\ldots
(\nabla f_n(x_n^k))^{\rm T}]^{\rm T}$,
all belonging to $\mathbb R^{nd}$.
When acting on these stacked vectors, $W$ denotes $W\otimes I_d$.

With $x_i^0=0$ and $y_i^0=g_i^0=\nabla f_i(x_i^0)$,
the deterministic gradient-tracking recursion consists of
communication updates
\begin{equation}\label{eq:communication_update}
\begin{cases}
x^{k+1}=Wx^k-\alpha y^k,\\
y^{k+1}=Wy^k+g^{k+1}-g^k,
\end{cases}
\quad k\in\{1,3,5,\ldots\},
\end{equation}
and local updates
\begin{equation}\label{eq:local_update}
\begin{cases}
x^{k+1}=x^k-\alpha y^k,\\
y^{k+1}=y^k+g^{k+1}-g^k,
\end{cases}
\quad k\in\{0,2,4,\ldots\}.
\end{equation}

The following result shows that the communicated information can reveal the complete local gradient trajectories.

\begin{theorem}
\label{thm:gradient_leakage}
Consider the local update gradient tracking algorithm~\eqref{eq:communication_update}--\eqref{eq:local_update} with
$
x_i^0=0\in \mathbb{R}^d,\;y_i^0=g_i^0,\; \alpha>0.
$
Assume that an attacker knows $W$ and $\alpha$ and observes $x^k$ and $y^k$ at every communication iteration $k\in\{1,3,5,\ldots\}$. Then, the complete sequences $\{x^k\}_{k\ge0}$, $\{y^k\}_{k\ge0}$, and $\{g^k\}_{k\ge0}$ can be uniquely reconstructed. Consequently, the local gradient trajectory $\{\nabla f_i(x_i^k)\}_{k\ge0}$ is revealed for every agent $i$.
\end{theorem}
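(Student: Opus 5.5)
The plan is to invert the recursions \eqref{eq:communication_update}--\eqref{eq:local_update} forward in time, one communication round at a time. We use only the observed pairs $(x^{2m+1},y^{2m+1})$, $m\ge0$, together with the known quantities $W$, $\alpha$, and the initialization $x^0=0$. All equations below are linear in the unknowns, and the only nonlinearity, $g^k=\nabla f(x^k)$, never has to be inverted: the gradients will be read off as differences of tracking variables. The argument is therefore an induction on $m$ whose hypothesis is that $x^{k},y^{k},g^{k}$ are known for all $k\le 2m$, with $g^{2m+1}$ still to be determined.

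For the base case, the local update at $k=0$ gives $x^1=x^0-\alpha y^0=-\alpha y^0$. Since $x^1$ is observed and $\alpha>0$, we obtain $y^0=-x^1/\alpha$, and hence $g^0=y^0$ by the initialization $y^0=g^0$. The second local equation $y^1=y^0+g^1-g^0=g^1$ then yields $g^1=y^1$, which is observed. So after the base step, $x^0,x^1,y^0,y^1,g^0,g^1$ are all known.

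For the inductive step, suppose $x^{k},y^{k},g^{k}$ are known for all $k\le 2m+1$.
\begin{enumerate}
\item The communication update at $k=2m+1$ gives $x^{2m+2}=Wx^{2m+1}-\alpha y^{2m+1}$, which is directly computable.
\item The local update at $k=2m+2$ gives $x^{2m+3}=x^{2m+2}-\alpha y^{2m+2}$. Since $x^{2m+3}$ is observed, $y^{2m+2}=(x^{2m+2}-x^{2m+3})/\alpha$.
\item From $y^{2m+2}=Wy^{2m+1}+g^{2m+2}-g^{2m+1}$ we obtain $g^{2m+2}=y^{2m+2}-Wy^{2m+1}+g^{2m+1}$.
\item From $y^{2m+3}=y^{2m+2}+g^{2m+3}-g^{2m+2}$, with $y^{2m+3}$ observed, we obtain $g^{2m+3}=y^{2m+3}-y^{2m+2}+g^{2m+2}$.
\end{enumerate}
This closes the induction. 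Each step solves a linear equation whose coefficient is either $1$ or $\alpha>0$, so the reconstruction is unique. Because $W\otimes I_d$ acts blockwise, the $i$-th block of $g^k$ is exactly $\nabla f_i(x_i^k)$, which gives the claimed leakage for every agent $i$.

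The main point needing care is the indexing, not any estimate. One must ensure that each unknown is solved for before it is used. In particular, $y^{2m+2}$ must be obtained from the \emph{next} observed primal state $x^{2m+3}$ rather than from the $y$-recursion, which would involve the not-yet-known $g^{2m+2}$. With that ordering, which the steps above fix, every quantity is expressed through already-known ones. I would also note explicitly that the attacker never needs $f_i$ itself, since the gradients are identified purely as differences of tracking variables.
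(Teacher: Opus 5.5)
Your proof is correct and follows essentially the same route as the paper. Both use the base step $y^0=g^0=-x^1/\alpha$, $g^1=y^1$, followed by the per-round chain $x^{k+1}=Wx^k-\alpha y^k$, $y^{k+1}=(x^{k+1}-x^{k+2})/\alpha$, $g^{k+1}=y^{k+1}-Wy^k+g^k$, $g^{k+2}=y^{k+2}-y^{k+1}+g^{k+1}$ for odd $k$. One harmless slip: you first state the induction hypothesis as knowledge for $k\le 2m$ with $g^{2m+1}$ unknown, but the step you actually run assumes knowledge for $k\le 2m+1$, and that version is the one consistent with your base case.
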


\begin{proof}
At the first local iteration,
$
x^1=x^0-\alpha y^0,\;
y^1=y^0+g^1-g^0.
$
Since $x^0=0$ is known and $y^0=g^0$, the attacker's observations $x^1$ and $y^1$ give
$
g^0=y^0=\frac{x^0-x^1}{\alpha},
\;
g^1=y^1.
$

We now proceed recursively. Let $k$ be an arbitrary odd communication iteration and suppose that $x^k$, $y^k$, and $g^k$ are known. From the communication update,
\[
x^{k+1}=Wx^k-\alpha y^k,
\quad
y^{k+1}=Wy^k+g^{k+1}-g^k,
\]
the state $x^{k+1}$ can be computed directly. The next iteration is local, and therefore
\[
x^{k+2}=x^{k+1}-\alpha y^{k+1},
\quad
y^{k+2}=y^{k+1}+g^{k+2}-g^{k+1}.
\]
Since $x^{k+2}$ and $y^{k+2}$ are observed at the next communication iteration, the intermediate tracking state and the two successive gradients are obtained as
\[
\begin{aligned}
y^{k+1}
&=\frac{x^{k+1}-x^{k+2}}{\alpha},\\
g^{k+1}
&=y^{k+1}-Wy^k+g^k,\\
g^{k+2}
&=y^{k+2}-y^{k+1}+g^{k+1}.
\end{aligned}
\]

Hence, knowledge of $(x^k,y^k,g^k)$ at one communication iteration, together with the
observation $(x^{k+1}, y^{k+1})$ , uniquely determines
$
x^{k+1},\;y^{k+1},\; g^{k+1}, \text{and} \; g^{k+2}.
$
Starting from the reconstructed values $g^0$ and $g^1$, this argument can be applied successively for $k=1,3,5,\ldots$. Therefore,
$
x^k,\; y^k, \text{and}\; g^k
$
are uniquely determined for all $k\ge0$. Since the $i$th $d$-dimensional block of $g^k$ is $g_i^k=\nabla f_i(x_i^k)$, the complete local gradient trajectory $\{\nabla f_i(x_i^k)\}_{k\ge0}$ is revealed for every agent $i$.
\end{proof}

\section{Communication-Efficient Differentially Private Gradient Tracking Algorithm}

The analysis in the previous section shows that reducing the communication frequency alone does not protect the private information contained in gradient tracking. In particular, although one communication-free local update is performed between two consecutive communication rounds, the states exchanged at the communication iterations are sufficient for an attacker to reconstruct the complete gradient trajectory. Therefore, an additional privacy mechanism is required to prevent the communicated information from revealing the
underlying states and local gradients.

Motivated by this observation, we introduce random perturbations into the information exchanged at communication iterations. The proposed method has two main objectives. First, the state information communicated by each agent is perturbed to prevent an attacker from directly observing its intermediate optimization state. Second, the gradient-tracking variable is independently perturbed to prevent the exact reconstruction of local gradient trajectories.

\subsection{Privacy-Preserving Gradient-Tracking Algorithm}
Let
$
\mathcal{C}:=\{1,3,5,\ldots\}
$
denote the set of communication iterations. For every
$k\in\mathcal{C}$, agent $i$ generates two independent random perturbations,
$\xi_i^k$ and $\nu_i^k$, and transmits
$
\widetilde{x}_i^k=x_i^k+\xi_i^k \;\text{and}
\;
\widetilde{y}_i^k=y_i^k+\nu_i^k
$
instead of its actual state $x_i^k$ and tracking variable $y_i^k$. 

The perturbations are chosen independently across agents,
communication updates, and the primal and tracking variables
$x_i^k$ and $y_i^k$. Specifically,
$
\xi_i^k
\sim \operatorname{Lap}_d\!\left(d_{x,i}q_i^{k-1}\right),
\;
\nu_i^k
\sim \operatorname{Lap}_d\!\left(d_{y,i}q_i^{k-1}\right),
$
where
$
d_{x,i}>0,
\;
d_{y,i}>0,
\;
0<q_i<1.
$
Here, $\operatorname{Lap}_d(b)$ denotes the distribution of
a $d$-dimensional vector with independent zero-mean Laplace
coordinates of scale $b$. The parameters $d_{x,i}$ and $d_{y,i}$ determine the initial perturbation magnitudes of the state and tracking variables, respectively, while $q_i$ determines its decay rate. 
An important feature of the proposed method is that privacy perturbations are introduced only when information is exchanged. At every even iteration, the agents perform the original communication-free local update without adding random perturbations. The resulting algorithm is summarized in Algorithm~\ref{alg:private}.
\begin{algorithm}[t]
\caption{Privacy-Preserving Gradient Tracking with Local Updates}
\label{alg:private}
\begin{algorithmic}[1]
\REQUIRE Initial point $x_i^0=0$, stepsize $\alpha>0$, $y_i^0=\nabla f_i(x_i^0)$,
and total number of iterations $K$.
\FOR{$k=0,1,\ldots,K-1$}
    \IF{$k$ is odd}
        \STATE \textbf{Communication update:}
        \STATE Each agent $i \in \mathcal{V}$ draws $\xi_i^k$ and $\nu_i^k$ according to
        the prescribed noise distributions.
        \STATE $\widetilde x_i^k=x_i^k+\xi_i^k$, \quad
               $\widetilde y_i^k=y_i^k+\nu_i^k$.
        \STATE Each agent $i \in \mathcal{V}$ broadcasts $\widetilde x_i^k,\widetilde y_i^k$
        and receives neighbors' messages.
        \STATE Each agent $i \in \mathcal{V}$ updates $\displaystyle
        x_i^{k+1}
        =\sum_{j\in\mathcal N_i}
        w_{ij}\widetilde x_j^k-\alpha y_i^k$ and $\displaystyle
        y_i^{k+1}
        =\sum_{j\in\mathcal N_i}
        w_{ij}\widetilde y_j^k
        +\nabla f_i(x_i^{k+1})-\nabla f_i(x_i^k)$.
    \ELSE
        \STATE \textbf{Local update:}
        \STATE Each agent $i \in \mathcal{V}$ updates $x_i^{k+1}=x_i^k-\alpha y_i^k$ and $y_i^{k+1}=y_i^k
        +\nabla f_i(x_i^{k+1})-\nabla f_i(x_i^k)$.
    \ENDIF
\ENDFOR
\end{algorithmic}
\end{algorithm}

The introduction of the direction perturbation modifies the exact gradient-tracking identity of the deterministic algorithm. We therefore first characterize this modified tracking relation and the resulting limiting point in the next section. The convergence and differential-privacy properties of the proposed method are then established subsequently.

\subsection{Aggregate tracking relation and candidate limiting point}
Since $y_i^0=\nabla f_i(x_i^0)$, we have
$\sum_i y_i^0=\sum_i\nabla f_i(x_i^0)$.
For a local iteration $k\notin\mathcal C$,
\[
\sum_{i=1}^{n}y_i^{k+1}
=
\sum_{i=1}^{n}y_i^k
+
\sum_{i=1}^{n}
\bigl(
\nabla f_i(x_i^{k+1})-\nabla f_i(x_i^k)
\bigr).
\]
For a communication iteration $k\in\mathcal C$, double stochasticity of $W$ gives
\[
\sum_{i=1}^{n}y_i^{k+1}
=
\sum_{i=1}^{n}y_i^k
+
\sum_{i=1}^{n}\nu_i^k
+
\sum_{i=1}^{n}
\bigl(
\nabla f_i(x_i^{k+1})-\nabla f_i(x_i^k)
\bigr).
\]
Summing these updates from iteration $0$ to $k-1$,
the intermediate gradient terms cancel each other out. Using the initialization yields
\begin{equation}\label{eq:aggregate_tracking}
\sum_{i=1}^{n}y_i^k
=
\sum_{i=1}^{n}\nabla f_i(x_i^k)
+
\sum_{\substack{\ell\in\mathcal C\\\ell<k}}
\sum_{i=1}^{n}\nu_i^\ell.
\end{equation}
Thus, communication-free local updates introduce no additional perturbations. The perturbations $\nu_i^k$ added to the tracking variables $y_i^k$ accumulate only at communication iterations $\ell\in\mathcal C$, whereas the perturbations $\xi_i^k$ added to the primal variables $x_i^k$ affect the primal trajectory without contributing a separate term to~\eqref{eq:aggregate_tracking}.

At each $k\in\mathcal C$, the noise scale of $\nu_i^k$ is
$d_{y,i}q_i^{k-1}$, where $0<q_i<1$.
Since $\mathbb E|Z|=b$ for $Z\sim\operatorname{Lap}(b)$,
\[
\sum_{k\in\mathcal C}\mathbb E\|\nu_i^k\|_1
=\frac{d\,d_{y,i}}{1-q_i^2}<\infty.
\]
The series of perturbations added to the tracking variables
therefore converges absolutely almost surely. Since the number of agents is finite,
the aggregate perturbation
$
\Delta_\nu:=
\sum_{\ell\in\mathcal C}\sum_{i=1}^{n}\nu_i^\ell
$
is well defined almost surely.

Define the candidate limiting point 
\[x^\infty \!\!\!=\! \operatorname{argmin}\{ \sum_i f_i(x)+\langle\Delta_\nu,x\rangle\},\] 
which satisfies \begin{equation}\label{eq:limiting_optimality}
\sum_{i=1}^{n}\nabla f_i(x^\infty)=-\Delta_\nu.
\end{equation} 
Strong convexity ensures the existence and uniqueness of $x^\infty$ almost surely. We next establish convergence to this point under sufficient stepsize conditions.

\section{Convergence Analysis}
\label{sec:convergence}

We analyze convergence by combining the error bounds for the local and communication updates. When acting on stacked vectors in $\mathbb R^{nd}$, $W$ and $J$ represents $W\otimes I_d$ and $J\otimes I_d$, respectively, where $J:=\mathbf1\mathbf1^{\rm T}/n$. Define $q:=\max_{i\in\mathcal V}q_i<1$.

\subsection{Error Variables and Perturbation Tails}

Define the averages $\bar x^k:=n^{-1}\sum_i x_i^k\in\mathbb R^d$ and $\bar y^k:=n^{-1}\sum_i y_i^k\in\mathbb R^d$, and the disagreements
\[
\check x^k:=x^k-\mathbf1_n\otimes\bar x^k\in\mathbb R^{nd},
\qquad
\check y^k:=y^k-\mathbf1_n\otimes\bar y^k\in\mathbb R^{nd},
\]
where $\mathbf1_n\in\mathbb R^n$ is the all-ones vector.
Define
\begin{equation}\label{eq:direction_tail}
r^k:=\frac1n
\sum_{\substack{\ell\in\mathcal C\\\ell\ge k}}
\sum_{i=1}^{n}\nu_i^\ell.
\end{equation}

\begin{lemma}\label{lem:tracker_tail}
Suppose Assumptions~\ref{ass:function} and~\ref{ass:mixing} and \eqref{eq:limiting_optimality} hold. Let $q:=\max_{i\in\mathcal V}q_i<1$.
Then, for every $k\ge0$,
\begin{equation}\label{eq:average_tracker_bound}
\|\bar y^k\|_2
\le L\|\bar x^k-x^\infty\|_2
+\frac L{\sqrt n}\|\check x^k\|_2+\|r^k\|_2.
\end{equation}
Moreover,
\begin{equation}\label{eq:tail_mean_bound}
\mathbb E\|r^k\|_2
\le c_rq^{2\lfloor k/2\rfloor},
\end{equation}
where
$c_r:=d\sum_{i=1}^n d_{y,i}/[n(1-q^2)]<\infty$.
\end{lemma}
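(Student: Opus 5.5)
The first bound \eqref{eq:average_tracker_bound} follows from the aggregate tracking identity \eqref{eq:aggregate_tracking}; the tail bound \eqref{eq:tail_mean_bound} follows from a direct geometric-series computation using the Laplace moments.

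\emph{Step 1: rewrite $\bar y^k$.} Dividing \eqref{eq:aggregate_tracking} by $n$ and writing the partial noise sum as $\frac1n\Delta_\nu-r^k$ gives
\[
\bar y^k=\frac1n\sum_{i=1}^n\nabla f_i(x_i^k)+\frac1n\Delta_\nu-r^k,
\]
since the communication iterations with $\ell<k$ and those with $\ell\ge k$ partition $\mathcal C$. This uses the almost sure absolute convergence of the noise series established before the lemma. By \eqref{eq:limiting_optimality}, $\frac1n\Delta_\nu=-\frac1n\sum_i\nabla f_i(x^\infty)$. Hence
\[
\bar y^k=\frac1n\sum_{i=1}^n\bigl(\nabla f_i(x_i^k)-\nabla f_i(x^\infty)\bigr)-r^k .
\]

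\emph{Step 2: bound the gradient difference.} Split each summand as $\nabla f_i(x_i^k)-\nabla f_i(\bar x^k)$ plus $\nabla f_i(\bar x^k)-\nabla f_i(x^\infty)$, and apply the $L$-Lipschitz property from Assumption~\ref{ass:function}.
\begin{itemize}
\item The second part contributes at most $L\|\bar x^k-x^\infty\|_2$.
\item The first part contributes at most $\frac Ln\sum_i\|x_i^k-\bar x^k\|_2$. By Cauchy--Schwarz this is at most $\frac L{\sqrt n}\bigl(\sum_i\|x_i^k-\bar x^k\|_2^2\bigr)^{1/2}=\frac L{\sqrt n}\|\check x^k\|_2$.
\end{itemize}
The triangle inequality then yields \eqref{eq:average_tracker_bound}.

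\emph{Step 3: the tail bound.} Use $\|\cdot\|_2\le\|\cdot\|_1$, the triangle inequality, and monotone convergence to exchange expectation and the infinite sum:
\[
\mathbb E\|r^k\|_2\le\frac1n\sum_{i=1}^n\sum_{\ell\in\mathcal C,\ \ell\ge k}\mathbb E\|\nu_i^\ell\|_1
=\frac dn\sum_{i=1}^n d_{y,i}\sum_{\ell\in\mathcal C,\ \ell\ge k}q_i^{\ell-1},
\]
where the equality uses $\mathbb E\|\nu_i^\ell\|_1=d\,d_{y,i}q_i^{\ell-1}$.

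The smallest odd $\ell\ge k$ is $\ell_0=2\lfloor k/2\rfloor+1$. This holds for both parities of $k$: for even $k=2m$ it gives $2m+1$, and for odd $k=2m+1$ it gives $k$ itself. Therefore $\ell_0-1=2\lfloor k/2\rfloor$, and
\[
\sum_{\ell\in\mathcal C,\ \ell\ge k}q_i^{\ell-1}=\frac{q_i^{2\lfloor k/2\rfloor}}{1-q_i^2}\le\frac{q^{2\lfloor k/2\rfloor}}{1-q^2},
\]
using $q_i\le q$. This gives exactly the constant $c_r$.

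The only delicate points are the index bookkeeping for $\ell_0$ and justifying the rearrangement of the infinite noise sum (by absolute convergence and Tonelli). Neither is a real obstacle.
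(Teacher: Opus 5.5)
Your proposal is correct and follows essentially the same route as the paper. You split the accumulated noise as $\frac1n\Delta_\nu - r^k$, invoke the limiting optimality condition, add and subtract $\nabla f_i(\bar x^k)$ with Lipschitz continuity and Cauchy--Schwarz, and bound $\mathbb E\|r^k\|_2$ via $\|\cdot\|_2\le\|\cdot\|_1$, the Laplace first moment, and a geometric series starting at $\ell_0=2\lfloor k/2\rfloor+1$. The only cosmetic difference is that you sum the series in $q_i^2$ before using $q_i\le q$, whereas the paper bounds $q_i^{\ell-1}\le q^{\ell-1}$ first; both yield the same constant $c_r$.
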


\begin{proof}
By the definitions of $\Delta_\nu$ and $r^k$, we have
$
\sum_{\substack{\ell\in\mathcal C\\ \ell<k}}
\sum_{i=1}^{n}\nu_i^\ell
=\Delta_\nu-nr^k.
$
Substituting this sum into \eqref{eq:aggregate_tracking} and applying \eqref{eq:limiting_optimality} gives
$
\bar y^k
=\frac1n\sum_{i=1}^{n}
\bigl[\nabla f_i(x_i^k)-\nabla f_i(x^\infty)\bigr]-r^k.
$

Adding and subtracting $\nabla f_i(\bar x^k)$ and using $L$-Lipschitz continuity yields
$
\|\bar y^k\|_2
\le L\|\bar x^k-x^\infty\|_2
+\frac Ln\sum_{i=1}^n\|x_i^k-\bar x^k\|_2+\|r^k\|_2.
$
By Cauchy--Schwarz inequality,
$\sum_{i=1}^n\|x_i^k-\bar x^k\|_2
\le\sqrt n\,\|\check x^k\|_2$,
which leads to \eqref{eq:average_tracker_bound}.

Next, set $c_\nu:=d\sum_i d_{y,i}$. Since $\mathbb E|Z|=b$ for $Z\sim\operatorname{Lap}(b)$,
$
\sum_{i=1}^n\mathbb E\|\nu_i^\ell\|_1
=d\sum_{i=1}^n d_{y,i}q_i^{\ell-1}
\le c_\nu q^{\ell-1},\ell\in\mathcal C.
$
The smallest communication index $\ell\ge k$ in (6) is
$2\lfloor k/2\rfloor+1$, where $\lfloor\cdot\rfloor$
denotes the floor function. Using the triangle inequality, the fact that $\|v\|_2\le\|v\|_1$, and the geometric-series formula gives
\[
\mathbb E\|r^k\|_2
\le \frac1n
\sum_{\substack{\ell\in\mathcal C\\\ell\ge k}}
\sum_{i=1}^n\mathbb E\|\nu_i^\ell\|_1
\le \frac{c_\nu q^{2\lfloor k/2\rfloor}}{n(1-q^2)}
=c_rq^{2\lfloor k/2\rfloor},
\]
which proves \eqref{eq:tail_mean_bound}. 
\end{proof}

The stacked perturbations satisfy
$
\mathbb E\|\xi^k\|_2\le c_\xi q^{k-1},
\;
\mathbb E\|\nu^k\|_2\le c_\nu q^{k-1},
\; k\in\mathcal C,
$
where $c_\xi:=d\sum_i d_{x,i}$.
These bounds will be used in the two-step comparison recursion.

\subsection{Two-Step Comparison Recursion}

We track three errors: the distance of the network average from $x^\infty$, the disagreement among local estimates, and the disagreement among gradient trackers. Define
\[
z^k:=
\begin{bmatrix}
\|\bar x^k-x^\infty\|_2\\
\|\check x^k\|_2\\
\|\check y^k\|_2
\end{bmatrix},
\quad
G(u):=\frac1n\sum_{i=1}^n\nabla f_i(u).
\]

\begin{lemma}\label{lem:2}
Suppose Assumptions~\ref{ass:function} and~\ref{ass:mixing} hold. Let $0<\alpha<2\mu/L^2$ and $\kappa_\alpha:=\sqrt{1-2\alpha\mu+\alpha^2L^2}<1$. For every even $k\ge0$,
\begin{equation}\label{eq:two_step_comparison}
z^{k+2}\le R(\alpha)z^k+m^k,
\end{equation} 
where $R(\alpha):=A_{\rm C}A_{\rm L}$ is componentwise nonnegative and $m^k:=A_{\rm C}u_{\rm L}^k+u_{\rm C}^{k+1}$. Here and throughout, vector inequalities are interpreted componentwise, and a matrix is said to be nonnegative (respectively, positive) if all its entries are nonnegative (respectively, positive).
\end{lemma}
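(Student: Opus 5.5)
The plan is to prove \eqref{eq:two_step_comparison} by composing two one-step comparison inequalities. For a local step $k$ (even) we will show $z^{k+1}\le A_{\rm L}z^k+u_{\rm L}^k$, and for a communication step $k+1$ (odd) we will show $z^{k+2}\le A_{\rm C}z^{k+1}+u_{\rm C}^{k+1}$, with $A_{\rm L},A_{\rm C}\in\mathbb R^{3\times3}$ entrywise nonnegative. Substituting the first into the second gives $z^{k+2}\le A_{\rm C}A_{\rm L}z^k+A_{\rm C}u_{\rm L}^k+u_{\rm C}^{k+1}$. Because $A_{\rm C}\ge0$ entrywise, multiplying a componentwise inequality by $A_{\rm C}$ preserves it, and this composition is exactly the claim with $R(\alpha)=A_{\rm C}A_{\rm L}$ and $m^k$ as defined. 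Each one-step bound comes from the three error components: the averaged recursion, the disagreement recursions, and Lemma~\ref{lem:tracker_tail} to remove $\bar y$.

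\textbf{Local step ($k$ even, no noise).} Averaging gives $\bar x^{k+1}=\bar x^k-\alpha\bar y^k$. We write $\bar y^k=G(\bar x^k)-G(x^\infty)+\bigl[\frac1n\sum_i(\nabla f_i(x_i^k)-\nabla f_i(\bar x^k))\bigr]-r^k$, as in the proof of Lemma~\ref{lem:tracker_tail}. Since $G$ is $\mu$-strongly monotone and $L$-Lipschitz, the standard expansion gives $\|u-x^\infty-\alpha(G(u)-G(x^\infty))\|\le\kappa_\alpha\|u-x^\infty\|$, and this is where $\alpha<2\mu/L^2$ enters. Hence
\[
\|\bar x^{k+1}-x^\infty\|\le\kappa_\alpha\|\bar x^k-x^\infty\|+\tfrac{\alpha L}{\sqrt n}\|\check x^k\|+\alpha\|r^k\|.
\]
For the disagreements, $\check x^{k+1}=\check x^k-\alpha\check y^k$, so $\|\check x^{k+1}\|\le\|\check x^k\|+\alpha\|\check y^k\|$. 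Next, $\check y^{k+1}=\check y^k+(I-J)(g^{k+1}-g^k)$, and $\|g^{k+1}-g^k\|\le L\|x^{k+1}-x^k\|=\alpha L\|y^k\|\le\alpha L(\|\check y^k\|+\sqrt n\|\bar y^k\|)$. We then bound $\|\bar y^k\|$ by \eqref{eq:average_tracker_bound}. This fills the first row, and the $\check y$ row, of $A_{\rm L}$ with nonnegative entries, and puts the tail terms $\alpha\|r^k\|$ and $\alpha L\sqrt n\|r^k\|$ into $u_{\rm L}^k$.

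\textbf{Communication step ($k+1$ odd).} Write $x^{k+2}=W(x^{k+1}+\xi^{k+1})-\alpha y^{k+1}$ and $y^{k+2}=W(y^{k+1}+\nu^{k+1})+g^{k+2}-g^{k+1}$. Since $W$ is doubly stochastic, the average becomes $\bar x^{k+2}=\bar x^{k+1}+\bar\xi^{k+1}-\alpha\bar y^{k+1}$. This is handled as before, with an extra additive $\|\xi^{k+1}\|/\sqrt n$. For the disagreements we use $(I-J)W=(W-J)(I-J)$ and $\|W-J\|=\lambda$, which gives $\|\check x^{k+2}\|\le\lambda\|\check x^{k+1}\|+\lambda\|\xi^{k+1}\|+\alpha\|\check y^{k+1}\|$. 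For the tracker, $\|\check y^{k+2}\|\le\lambda\|\check y^{k+1}\|+\lambda\|\nu^{k+1}\|+L\|x^{k+2}-x^{k+1}\|$. Here $x^{k+2}-x^{k+1}=(W-I)x^{k+1}+W\xi^{k+1}-\alpha y^{k+1}$ and $(W-I)x^{k+1}=(W-I)\check x^{k+1}$, so this difference is bounded by $\sigma\|\check x^{k+1}\|+\|\xi^{k+1}\|+\alpha(\|\check y^{k+1}\|+\sqrt n\|\bar y^{k+1}\|)$. Again Lemma~\ref{lem:tracker_tail} removes $\bar y^{k+1}$. The noise terms $\|\xi^{k+1}\|$, $\|\nu^{k+1}\|$ and $\|r^{k+1}\|$ are collected into $u_{\rm C}^{k+1}$.

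\textbf{Main obstacle.} The main difficulty is bookkeeping rather than any single estimate. We must make sure every coefficient appearing in $A_{\rm L}$ and $A_{\rm C}$ is nonnegative, and that every stochastic term lands in $u$ and never in $A$. In particular, in the average recursion we must apply the contraction $\kappa_\alpha$ to $G(\bar x)-G(x^\infty)$ before taking norms, rather than bounding $\bar y$ crudely by \eqref{eq:average_tracker_bound}. Only with this order does the first diagonal entry come out as $\kappa_\alpha<1$ and not $1+\alpha L$. Nonnegativity of $R(\alpha)=A_{\rm C}A_{\rm L}$ then follows immediately from nonnegativity of the two factors.
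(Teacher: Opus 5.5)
Your proposal is correct and follows essentially the same route as the paper. Both prove one-step comparisons $z^{k+1}\le A_{\rm L}z^k+u_{\rm L}^k$ and $z^{k+2}\le A_{\rm C}z^{k+1}+u_{\rm C}^{k+1}$ using the $\kappa_\alpha$-contraction for the averaged error, the $\|W-J\|_2=\lambda$ bound for disagreements, and Lemma~\ref{lem:tracker_tail} to eliminate $\bar y$, and then compose them via entrywise nonnegativity of $A_{\rm C}$. The only difference is cosmetic: the paper keeps the sharper noise terms $\|\bar\xi^k\|_2$, $\lambda\|\check\xi^k\|_2$, $\lambda\|\check\nu^k\|_2$ in $u_{\rm C}^k$ where you write $\|\xi\|_2/\sqrt n$, $\lambda\|\xi\|_2$, $\lambda\|\nu\|_2$, which merely enlarges $m^k$ and leaves the argument intact.
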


\begin{proof}
We first bound these errors during each update, then combine one local update with the following communication update.

The average gradient $G$ is $\mu$-strongly monotone and $L$-Lipschitz. Expanding the squared norm therefore gives
\begin{align*}
&\quad \;\|u-v-\alpha[G(u)-G(v)]\|_2^2\\
&=\|u-v\|_2^2
-2\alpha\langle u\!-\!v,G(u)\!-\!G(v)\rangle
\!+\!\alpha^2\|G(u)-G(v)\|_2^2\\
&\le(1-2\alpha\mu+\alpha^2L^2)\|u-v\|_2^2.
\end{align*}
Hence, we obtain
\begin{equation}\label{eq:shifted_gradient_contraction}
\|u-v-\alpha[G(u)-G(v)]\|_2
\le\kappa_\alpha\|u-v\|_2.
\end{equation}

Averaging the local updates of all agents gives $\bar x^{k+1}=\bar x^k-\alpha\bar y^k$. Define the gradient mismatch $e^k:=n^{-1}\sum_i[\nabla f_i(x_i^k)-\nabla f_i(\bar x^k)]$. Lipschitz continuity and Cauchy--Schwarz inequality give
\begin{equation}
\|e^k\|_2
\le\frac Ln\sum_i\|x_i^k-\bar x^k\|_2
\le\frac L{\sqrt n}\|\check x^k\|_2.
\label{eq:e_k}
\end{equation}

Subtracting $x^\infty$ from the average update
$\bar x^{k+1}=\bar x^k-\alpha\bar y^k$ and substituting
the tracking identity
$\bar y^k=G(\bar x^k)-G(x^\infty)+e^k-r^k$, we obtain
\[
\begin{aligned}
\bar x^{k+1}-x^\infty
={}&\bar x^k-x^\infty
-\alpha\bigl[G(\bar x^k)-G(x^\infty)\bigr]\\
&-\alpha e^k+\alpha r^k.
\end{aligned}
\]
Applying the triangle inequality, the contraction bound \eqref{eq:shifted_gradient_contraction}
with $u=\bar x^k$ and $v=x^\infty$, and \eqref{eq:e_k} gives
\begin{equation}
\begin{aligned}
\|\bar x^{k+1}-x^\infty\|_2
\le{}&\kappa_\alpha\|\bar x^k-x^\infty\|_2\\
&+\frac{\alpha L}{\sqrt n}\|\check x^k\|_2
+\alpha\|r^k\|_2.
\end{aligned}
\label{eq:local_average_bound}
\end{equation}

Subtracting the average update
$\bar{x}^{k+1}=\bar{x}^k-\alpha\bar{y}^k$
from each agent's local update
$x_i^{k+1}=x_i^k-\alpha y_i^k$ gives $\check x^{k+1}=\check x^k-\alpha\check y^k$, and hence
\begin{equation}\label{eq:local_disagreement_bound}
\|\check x^{k+1}\|_2
\le\|\check x^k\|_2+\alpha\|\check y^k\|_2.
\end{equation}

For the tracker update, subtracting the average is an orthogonal projection and cannot increase the norm. Lipschitz continuity and $x^{k+1}-x^k=-\alpha y^k$ therefore imply that
\[
\|\check y^{k+1}\|_2
\le\|\check y^k\|_2+L\|x^{k+1}-x^k\|_2
=\|\check y^k\|_2+\alpha L\|y^k\|_2.
\]
We use $\|y^k\|_2\le\|\check y^k\|_2+\sqrt n\|\bar y^k\|_2$ and \eqref{eq:average_tracker_bound} to arrive at
\begin{align}
\|\check y^{k+1}\|_2
\le{}&\alpha\sqrt nL^2\|\bar x^k-x^\infty\|_2
+\alpha L^2\|\check x^k\|_2\nonumber\\
&+(1+\alpha L)\|\check y^k\|_2
+\alpha\sqrt nL\|r^k\|_2.
\label{eq:local_tracker_bound}
\end{align}
Combining the three bounds \eqref{eq:local_average_bound} \eqref{eq:local_disagreement_bound} and \eqref{eq:local_tracker_bound} yields \begin{equation}\label{eq:local_comparison}
    z^{k+1}\le A_{\rm L}z^k+u_{\rm L}^k, \quad k \in \{0,2,4,\ldots, \}
    \end{equation}
where
\[
\begin{aligned}
A_{\rm L}\!\!
&:=\!\!\!
\begin{bmatrix}
\kappa_\alpha \!& \alpha\! L/\sqrt n & 0\\
0 & 1 & \alpha\\
\alpha\sqrt nL^2 & \alpha L^2 & 1+\alpha L
\end{bmatrix},
u_{\rm L}^k\!\!\!\!\!
&:=
\alpha\|r^k\|_2 \!\!
\begin{bmatrix}
1\\0\\\sqrt nL
\end{bmatrix}.
\end{aligned}
\]

Averaging the communication update gives $\bar x^{k+1}=\bar x^k-\alpha\bar y^k+\bar\xi^k$. Repeating the average-error argument and subtracting the average from the stacked update yield
\begin{align}
\|\bar x^{k+1}-x^\infty\|_2
\le{}&\kappa_\alpha\|\bar x^k-x^\infty\|_2
+\frac{\alpha L}{\sqrt n}\|\check x^k\|_2\nonumber\\
&+\alpha\|r^k\|_2+\|\bar\xi^k\|_2.
\label{eq:x_disagrrement_bound}
\end{align}

Subtracting the network average from the communication
update and using $W\mathbf1_n=\mathbf1_n$ gives
\[
\begin{aligned}
\check x^{k+1}
&=x^{k+1}-\mathbf1_n\otimes\bar x^{k+1}\\
&=W\check x^k-\alpha\check y^k+W\check\xi^k\\
&=(W-J)\check x^k-\alpha\check y^k
  +(W-J)\check\xi^k,
\end{aligned}
\]
where the last equality follows from
$J\check x^k=J\check\xi^k=0$.
Applying the triangle inequality and
$\|W-J\|_2=\lambda$, we obtain
\begin{align}
\|\check x^{k+1}\|_2
\le{}&\lambda\|\check x^k\|_2
+\alpha\|\check y^k\|_2
+\lambda\|\check\xi^k\|_2.
\label{eq:communication_disagreement_bound}
\end{align}
Similarly, the tracker disagreement satisfies
\begin{equation}
\|\check y^{k+1}\|_2
\le\lambda\|\check y^k\|_2
+\lambda\|\check\nu^k\|_2
+L\|x^{k+1}-x^k\|_2.
\label{eq:yerror_disagreement}
\end{equation}
To bound the last term in~\eqref{eq:yerror_disagreement}, we use $x^{k+1}-x^k=(W-I)\check x^k-\alpha y^k+W\xi^k$, $\|W\|_2=1$, and $\|y^k\|_2\le\|\check y^k\|_2+\sqrt n\|\bar y^k\|_2$, and obtain
\begin{align*}
\|x^{k+1}-x^k\|_2
\le{}&\alpha\sqrt nL\|\bar x^k-x^\infty\|
+(\sigma+\alpha L)\|\check x^k\|_2\\
&+\alpha\|\check y^k\|_2
+\alpha\sqrt n\|r^k\|_2+\|\xi^k\|_2.
\end{align*}
Combining \eqref{eq:x_disagrrement_bound}, ~\eqref{eq:communication_disagreement_bound}, and \eqref{eq:yerror_disagreement}  gives
\begin{equation}\label{eq:communication_comparison}
z^{k+1}\le A_{\rm C}z^k+u_{\rm C}^k, \quad k \in \mathcal{C},
\end{equation}
where
\[
A_{\rm C}:=
\begin{bmatrix}
\kappa_\alpha&\alpha L/\sqrt n&0\\
0&\lambda&\alpha\\
\alpha\sqrt nL^2&L(\sigma+\alpha L)&\lambda+\alpha L
\end{bmatrix},
\]
\[
u_{\rm C}^k:=
\begin{bmatrix}
\alpha\|r^k\|_2+\|\bar\xi^k\|_2\\
\lambda\|\check\xi^k\|_2\\
\alpha\sqrt nL\|r^k\|_2+L\|\xi^k\|_2
+\lambda\|\check\nu^k\|_2
\end{bmatrix}.
\]

For an even $k$, the local step is followed by communication. Since $A_{\rm C}$ has nonnegative entries, substituting
\eqref{eq:local_comparison} into \eqref{eq:communication_comparison} gives \eqref{eq:two_step_comparison}
\end{proof}

By Lemma \ref{lem:2}, $R(\alpha)$ propagates the existing errors, while $m^k$ collects the perturbation contributions over both steps. The perturbation-tail bounds imply that 
$
\mathbb E[m^k]\le c_mq^k\mathbf1,
\; k\in\{0,2,4,\ldots\}
$ for some $c_m>0$.
Therefore, the expected perturbation contribution is bounded by a geometrically decaying sequence, with a factor $q^2$ per pair of local and communication updates. We next show that stability of matrix $R(\alpha)$ makes the errors decay as well.

\subsection{Convergence in Mean}

\begin{theorem}\label{thm:mean_convergence} Suppose Assumptions~\ref{ass:function} and~\ref{ass:mixing} hold, and the perturbations are generated as specified in Algorithm~\ref{alg:private}. Let
\begin{equation}\label{eq:convergence_stepsize}
0<\alpha<\frac{2\mu}{L^2},
\qquad \rho(R(\alpha))<1.
\end{equation}
For every $p\in(\max\{\rho(R(\alpha)),q^2\},1)$, there is a constant $C_p<\infty$ such that
\begin{equation}\label{eq:mean_linear_rate}
\mathbb E\|x_i^k-x^\infty\|_2+\mathbb E\|y_i^k\|_2
\le C_pp^{\lfloor k/2\rfloor},
\quad i\in\mathcal V,\quad k\ge0.
\end{equation}
Consequently, $x_i^k\to x^\infty$ and $y_i^k\to0$ in mean.
\end{theorem}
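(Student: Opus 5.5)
The plan is to take expectations in the two-step comparison recursion of Lemma~\ref{lem:2} and unroll it. Let $w^j:=\mathbb E[z^{2j}]$ for $j\ge0$. Since $R(\alpha)$ is componentwise nonnegative, expectation preserves the componentwise inequality \eqref{eq:two_step_comparison}. Using the stated bound $\mathbb E[m^{2j}]\le c_mq^{2j}\mathbf1$, I obtain
\[
w^{j+1}\le R(\alpha)w^j+c_m(q^2)^j\mathbf1 .
\]
Induction, together with nonnegativity of $R(\alpha)$, then gives
\[
w^j\le R(\alpha)^jw^0+c_m\sum_{s=0}^{j-1}R(\alpha)^{j-1-s}(q^2)^s\mathbf1 .
\]
Here $w^0=z^0$ is deterministic, since $x^0=0$ and $y^0=g^0$.

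Next I fix $p\in(\max\{\rho(R(\alpha)),q^2\},1)$. By Gelfand's formula (or a Jordan-form bound), there is $M_p$ with $\|R(\alpha)^t\|\le M_p p^t$ for all $t\ge0$, in any fixed norm, say $\infty$-norm. The convolution term is then bounded by
\[
c_m M_p\sum_s p^{j-1-s}(q^2)^s\le \frac{c_mM_p}{p-q^2}\,p^{j}.
\]
This uses $\sum_s (q^2/p)^s\le 1/(1-q^2/p)$, and it is exactly here that $p>q^2$ is required. Hence $\|w^j\|_\infty\le C'p^j$.

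For odd indices I use the one-step bound \eqref{eq:local_comparison}: $\mathbb E z^{2j+1}\le A_{\rm L}w^j+\mathbb E u_{\rm L}^{2j}$. Lemma~\ref{lem:tracker_tail} gives $\mathbb E u_{\rm L}^{2j}=O(q^{2j})$, so the odd iterates also satisfy $O(p^j)=O(p^{\lfloor k/2\rfloor})$.

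Finally I convert to agent-wise quantities. For the primal error,
\[
\|x_i^k-x^\infty\|\le\|\bar x^k-x^\infty\|+\|\check x^k\|.
\]
For the tracker, $\|y_i^k\|\le\|\check y^k\|+\|\bar y^k\|$, and $\|\bar y^k\|$ is bounded via \eqref{eq:average_tracker_bound} by $L z^k_1+\frac{L}{\sqrt n}z^k_2+\|r^k\|$. Taking expectations and using \eqref{eq:tail_mean_bound}, which gives $\mathbb E\|r^k\|\le c_rq^{2\lfloor k/2\rfloor}\le c_rp^{\lfloor k/2\rfloor}$, yields \eqref{eq:mean_linear_rate} with some $C_p$. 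Mean convergence follows because $p<1$.

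The main obstacle I expect is a subtle point about $x^\infty$: it is random and depends on future noise through $\Delta_\nu$, so $z^k$ is not adapted. However, Lemma~\ref{lem:2} is a pathwise inequality, and taking expectations only needs integrability and linearity, not independence. Thus the argument goes through, provided $\mathbb E\|x^\infty\|<\infty$. This holds by strong convexity, since $\|x^\infty-x^\star\|\le\|\Delta_\nu\|/(n\mu)$ and $\Delta_\nu$ has finite mean. I would state this integrability explicitly so that $w^0$ and all $w^j$ are finite.

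A second, minor point is that the constant $M_p$ may be large when $\rho(R(\alpha))$ is close to $p$, but the statement only claims $C_p<\infty$.
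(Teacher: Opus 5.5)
Your proposal is correct and follows essentially the same route as the paper: take expectations in the two-step recursion of Lemma~\ref{lem:2}, unroll it, bound $R(\alpha)^t$ via the spectral-radius formula and the geometric convolution using $p>q^2$, handle odd $k$ with \eqref{eq:local_comparison}, and pass to agent-wise bounds via \eqref{eq:average_tracker_bound} and \eqref{eq:tail_mean_bound}. The paper inserts an intermediate rate $\widehat p$, which your direct use of $p$ makes unnecessary; one small correction is that $z^0$ is not deterministic, since its first entry is $\|\bar x^0-x^\infty\|_2=\|x^\infty\|_2$, but your integrability argument via $\|x^\infty-x^\star\|_2\le\|\Delta_\nu\|_2/(n\mu)$ is exactly how the paper makes $\mathbb E[z^0]$ finite, so nothing breaks.
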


\begin{proof}
The initial errors have finite expectation. Indeed, strong monotonicity gives
$\|x^\infty-x^\star\|_2\le\|\Delta_\nu\|_2/(n\mu)$, where $\mathbb E\|\Delta_\nu\| <\infty$. Together with initialization, this implies that $\mathbb E[z^0]$ is finite.

Write $R:=R(\alpha)$. Taking expectations in \eqref{eq:two_step_comparison} and repeatedly substituting the preceding bound gives, for even $k\ge2$,
\begin{align}
\mathbb E[z^k]
\le{}&R^{k/2}\mathbb E[z^0]+c_m\sum_{j=0}^{k/2-1}
R^{k/2-1-j}q^{2j}\mathbf1.
\label{eq:expected_error_expansion}
\end{align}
The first term is the effect of the initial errors; the sum accounts for perturbations from previous update pairs. Choose $\widehat p$ with $\max\{\rho(R),q^2\}<\widehat p<p$. By the spectral-radius formula, there exists a finite $c_R>0$ such that $\|R^j\|_2\le c_R\widehat p^{\,j}$ for all $j\ge0$. For even $k\ge2$, the sum is bounded as
\[
\sum_{j=0}^{k/2-1}\widehat p^{\,k/2-1-j}q^{2j}
=
\frac{\widehat p^{\,k/2}-q^k}{\widehat p-q^2}
\le
\frac{\widehat p^{\,k/2}}{\widehat p-q^2}.
\]
Combining this estimate with the bound on $R^{k/2}$ and using $\widehat p<p$ yields $\|\mathbb E[z^k]\|_2\le c_p p^{k/2}$ for any even $k$, including $k=0$, with a suitable finite constant $c_p$. For any odd $k$, the local update error propagation bound~\eqref{eq:local_comparison} applied at $k- 1$ and the residual tail bound
$\mathbb E\|r^{k-1}\|_2
\le c_r q^{k-1}\le c_r p^{(k-1)/2}$
give, after enlarging $c_p$ if necessary,
$
\|\mathbb E[z^k]\|_2
\le c_p p^{\lfloor k/2\rfloor},
\, k\ge0.
$

To bound the errors of each agent, we use $\|x_i^k-x^\infty\|_2
\le\|\bar x^k-x^\infty\|_2+\|\check x^k\|_2$
and
$\|y_i^k\|_2\le\|\check y^k\|_2+\|\bar y^k\|_2$. Combining these with \eqref{eq:average_tracker_bound} and \eqref{eq:tail_mean_bound} results in \eqref{eq:mean_linear_rate}. 
\end{proof}

The stepsize condition holds for sufficiently small positive $\alpha$. Indeed,
\[
R(0)=
\begin{bmatrix}
1&0&0\\
0&\lambda&0\\
0&L\sigma&\lambda
\end{bmatrix},
\]
with eigenvalues $1,\lambda,\lambda$. Since $0\le\lambda<1$, only the eigenvalue at $1$ needs to move inside the unit circle.

Let $\vartheta(\alpha)$ denote the eigenvalue of $R(\alpha)$ satisfying $\vartheta(0)=1$. This eigenvalue is simple at $\alpha=0$, with left and right eigenvectors $e_1=[1,0,0]^{\rm T}$. Since $R_{11}(\alpha)=\kappa_\alpha^2 =1 2\mu\alpha+L^2\alpha^2$, the derivative formula for a simple eigenvalue gives
$
\vartheta'(0)=e_1^{\rm T}\frac{d}{dt}R(0)e_1=-2\mu<0.
$ Consequently, $\vartheta(\alpha)=1-2\mu\alpha+O(\alpha^2)$, so this eigenvalue lies strictly between $0$ and $1$ for sufficiently small positive $\alpha$. By continuity, the other two eigenvalues remain near $\lambda$ and also have magnitude less than $1$. Thus, $\rho(R(\alpha))<1$ for sufficiently small positive $\alpha$. Choosing $\alpha$ small enough also ensures $\alpha<2\mu/L^2$. Therefore, (21) provides computable sufficient conditions for convergence.

Thus, all agents converge in mean to the same random perturbed optimizer. If any direction-noise scale is positive, $\mathbb E\|x^\infty-x^\star\|_2^2>0$. State perturbations affect the transient behavior but do not enter the limiting optimality condition. Setting all direction perturbations to zero recovers $x^\infty=x^\star$ under the convergence conditions.

\section{Differential-Privacy Analysis}
\label{sec:privacy}

We analyze privacy with respect to the complete communication transcript
$
\mathcal M(\mathcal P)
:=\{(\widetilde x^k,\widetilde y^k):k\in\mathcal C\}.
$
The observer knows the graph, weights, stepsize, noise distributions, and initial primal states, but does not observe the agents' internal states or realized perturbations. All algorithmic parameters, including the noise scales, are fixed independently of the private objective within the admissible problem class. 

\subsection{Adjacency and Protected Information}

We consider adjacent problem instances $\mathcal P$ and $\mathcal P'$ as defined in Definition~\ref{def:privacy_adjacency}. The initial primal states coincide, while the tracking variables are initialized separately as
$
y_j^0=\nabla f_j(x_j^0),
\quad
y_j^{\prime0}=\nabla f_j'(x_j^0).
$
Consequently, $y_i^{\prime0}-y_i^0=a_i$ at the changed agent, whereas $y_j^{\prime0}=y_j^0$ for $j\ne i$. The affine difference between adjacent objectives implies
the identical gradient increment property
\begin{equation}\label{eq:identical_gradient_increments}
\nabla f_i'(u)-\nabla f_i'(v)
=
\nabla f_i(u)-\nabla f_i(v),
\quad u,v\in\mathbb R^d.
\end{equation}
This property will be used in the subsequent analysis to bound the differences between internal states under adjacent instances.

\subsection{Sensitivity Under a Common Transcript}

Fix two adjacent instances that differ at agent $i$, and consider the same finite sequence of transmitted messages under both instances. For every $j\ne i$, the objective, initial states, and received messages are identical, so the internal states coincide throughout this sequence. We therefore focus on agent $i$ and define
$
\Delta x^k:=x_i^k-x_i^{\prime k},
\,
\Delta y^k:=y_i^k-y_i^{\prime k},
$
and
$h^k:=\nabla f_i(x_i^k)-\nabla f_i(x_i^{\prime k})$.
Lipschitz continuity results in
$\|h^k\|_2\le L\|\Delta x^k\|_2$.
By \eqref{eq:identical_gradient_increments}, the constant gradient offsets cancel when subtracting the gradient increments of the two instances.

At an odd iteration $k$ following the observation-matching argument in
\cite[Appendix J]{ding2022state}, we have $W\widetilde x^k=W\widetilde x^{\prime k}$ and $W\widetilde y^k=W\widetilde y^{\prime k}$. Subtracting the communication updates therefore yields
$
\Delta x^{k+1}=-\alpha\Delta y^k,
\,
\Delta y^{k+1}=h^{k+1}-h^k.
$
Substituting these relations into the subsequent local update gives
\begin{align}
\Delta x^{k+2}
&=-\alpha\Delta y^k-\alpha(h^{k+1}-h^k),
\label{eq:matched_local_primal}\\
\Delta y^{k+2}
&=\Delta y^{k+1}+h^{k+2}-h^{k+1}
=h^{k+2}-h^k.
\label{eq:matched_local_direction}
\end{align}

To bound the primal variable difference, we apply the triangle inequality to \eqref{eq:matched_local_primal} and use
$\|h^{k+1}\|_2\le L\|\Delta x^{k+1}\|_2
=\alpha L\|\Delta y^k\|_2$. We obtain
\begin{align}
\|\Delta x^{k+2}\|_2
&\le\alpha\|\Delta y^k\|_2
+\alpha\|h^{k+1}\|_2+\alpha\|h^k\|_2
\nonumber\\
&\le\alpha L\|\Delta x^k\|_2
+\alpha(1+\alpha L)\|\Delta y^k\|_2.
\label{eq:primal_sensitivity_bound}
\end{align}
Similarly, \eqref{eq:matched_local_direction} implies that
$
\|\Delta y^{k+2}\|_2
\le L\|\Delta x^{k+2}\|_2+L\|\Delta x^k\|_2.
$
Applying \eqref{eq:primal_sensitivity_bound} to the term $\|\Delta x^{k+2}\|_2$ yields
\begin{align}
\|\Delta y^{k+2}\|_2
&\le L(1+\alpha L)\|\Delta x^k\|_2
\nonumber\\
&\quad+\alpha L(1+\alpha L)\|\Delta y^k\|_2.
\label{eq:direction_sensitivity_bound}
\end{align}

From the coefficients in \eqref{eq:primal_sensitivity_bound} and \eqref{eq:direction_sensitivity_bound}, we define
\begin{equation}\label{eq:privacy_sensitivity_matrix}
B(\alpha):=
\begin{bmatrix}
\alpha L&\alpha(1+\alpha L)\\
L(1+\alpha L)&\alpha L(1+\alpha L)
\end{bmatrix}.
\end{equation}
For $s^k:=(\|\Delta x^k\|_2,\|\Delta y^k\|_2)^{\rm T}$,
the two-step sensitivity recursion is
$
s^{k+2}\le B(\alpha)s^k,
\; k\in\mathcal C,
$
where $B(\alpha)$ is component wise nonnegative matrix.

To initialize this recursion, we consider the first local update. Since the initial primal states coincide and the trackers are initialized with their respective gradients,
$\Delta x^0=0$ and $\Delta y^0=-a_i$. Hence,
$
\|\Delta y^0\|_2\le\delta_i,
\quad
\|\Delta x^1\|_2
=\alpha\|\Delta y^0\|_2\le\alpha\delta_i.
$
The local tracker update gives
$y_i^1=\nabla f_i(x_i^1)$ and
$y_i^{\prime1}=\nabla f_i'(x_i^{\prime1})$.
Using Lipschitz continuity and
$\nabla f_i'(x)=\nabla f_i(x)+a_i$, we obtain
$
\|\Delta y^1\|_2
\le L\|\Delta x^1\|_2+\|a_i\|_2
\le(1+\alpha L)\delta_i.
$
Thus, $s^1\le\delta_i b_\alpha$, where
\[
b_\alpha:=\begin{bmatrix}\alpha\\1+\alpha L\end{bmatrix}.
\]
Since $B(\alpha)$ is nonnegative, repeated applications of the recursion yield
\begin{equation}\label{eq:communication_sensitivity}
s^k\le\delta_i B(\alpha)^{(k-1)/2}b_\alpha,
\quad k\in\mathcal C.
\end{equation}
This bound holds uniformly over all possible observation sequences shared by the two adjacent problem instances.

\subsection{Infinite-Horizon Differential Privacy}

\begin{theorem}\label{thm:infinite_horizon_dp}
Suppose Assumption~\ref{ass:function} and the condition in~\eqref{eq:identical_gradient_increments} hold,  and all coordinates of the two communication perturbations are independent Laplace noise with the scales specified in Algorithm~\ref{alg:private}.
If
\begin{equation}\label{eq:privacy_decay_condition}
\rho(B(\alpha))<q_i^2<1,
\end{equation}
then the complete communication transcript is $\epsilon_i$ differentially private for changes at agent $i$, where
\begin{equation}\label{eq:privacy_matrix_budget}
\epsilon_i
=\sqrt d\,\delta_i
\begin{bmatrix}d_{x,i}^{-1}&d_{y,i}^{-1}\end{bmatrix}
\left(I-\frac{B(\alpha)}{q_i^2}\right)^{-1}b_\alpha.
\end{equation}
\end{theorem}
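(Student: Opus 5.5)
The approach is the standard Laplace density-ratio argument for infinite transcripts, as in \cite{ding2022state}. Fix adjacent instances $\mathcal P,\mathcal P'$ differing at agent $i$ and a finite horizon $K$. Consider the prefix transcript $\{(\widetilde x^k,\widetilde y^k):k\in\mathcal C,\,k\le K\}$. Given a realized transcript $O$, the internal states of $\mathcal P$ are a deterministic function of $O$, because each update uses only received messages and local computation. The same holds for $\mathcal P'$. The noise realizations producing $O$ are therefore uniquely determined:
\[
\xi_i^k=\widetilde x_i^k-x_i^k(O),\qquad \nu_i^k=\widetilde y_i^k-y_i^k(O),
\]
and likewise for $\mathcal P'$. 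For $j\ne i$ the states coincide, as argued in the sensitivity subsection, so the noises of agents $j\neq i$ agree under both instances. The densities of $O$ under the two instances are products of independent Laplace densities, and by the change of variables (a translation with unit Jacobian) their ratio involves only agent $i$'s factors.

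Next I bound the log-ratio of the densities. For each $k\in\mathcal C$, the triangle inequality on the Laplace exponent gives the factor
\[
\exp\!\Big(\frac{\|\Delta x^k\|_1}{d_{x,i}q_i^{k-1}}+\frac{\|\Delta y^k\|_1}{d_{y,i}q_i^{k-1}}\Big).
\]
Using $\|v\|_1\le\sqrt d\|v\|_2$ and the uniform matched-transcript bound \eqref{eq:communication_sensitivity}, with $k=2m+1$, the log-ratio is at most
\[
\sqrt d\,\delta_i\,[d_{x,i}^{-1}\; d_{y,i}^{-1}]\sum_{m\ge0}\Big(\frac{B(\alpha)}{q_i^2}\Big)^m b_\alpha .
\]
Here $q_i^{k-1}=q_i^{2m}$, so the exponents align exactly with the powers of $B(\alpha)^m$, and all terms are nonnegative since $B(\alpha)$ and $b_\alpha$ are nonnegative. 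Under \eqref{eq:privacy_decay_condition} we have $\rho(B(\alpha)/q_i^2)<1$. The Neumann series then converges to $(I-B(\alpha)/q_i^2)^{-1}$, which is also nonnegative, so the partial sums are bounded by $\epsilon_i$ in \eqref{eq:privacy_matrix_budget}, uniformly in $K$ and $O$.

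Integrating the pointwise density bound over any measurable set of finite-horizon transcripts gives $\epsilon_i$-DP for each prefix. To reach the infinite transcript, I will extend the inequality from cylinder sets to the product $\sigma$-algebra. The cleanest route is to note that $\mathcal O\mapsto e^{\epsilon_i}\Pr[\mathcal M(\mathcal P')\in\mathcal O]-\Pr[\mathcal M(\mathcal P)\in\mathcal O]$ is a finite signed measure that is nonnegative on the algebra of cylinder sets. It is then nonnegative on the generated $\sigma$-algebra, by the $\pi$--$\lambda$ / monotone class theorem or by approximating a measurable set by cylinders in measure.

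I expect the main obstacle to be rigor in the density step. First, the state reconstruction from $O$ must be a measurable bijection between noises and transcripts for both instances, including at the first communication iteration $k=1$, where $s^1\le\delta_i b_\alpha$ is used. Second, the sensitivity bound must apply along the same transcript for both instances, which is exactly what the matched-transcript derivation supplies. The limiting passage is routine once uniformity in $K$ is established.
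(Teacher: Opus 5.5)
Your proposal is correct and follows the paper's proof essentially step for step. Your triangular change of variables from noises to transcript is the same computation as the paper's product of conditional Laplace densities, and your monotone-class extension from cylinder sets to the product $\sigma$-algebra makes rigorous the paper's one-line passage from finite to infinite transcripts. One small caution: use the monotone class theorem or the approximation argument rather than $\pi$--$\lambda$, because the class of sets $\mathcal O$ with $e^{\epsilon_i}\Pr[\mathcal M(\mathcal P')\in\mathcal O]\ge\Pr[\mathcal M(\mathcal P)\in\mathcal O]$ is closed under monotone limits but not under complements, so it is not a Dynkin system.
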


\begin{proof}
Fix adjacent problem instances $\mathcal P$ and $\mathcal P'$ that differ at agent $i$ according to Defintion~\ref{def:privacy_adjacency}.
We first establish the privacy inequality for a finite communication transcript and then extend it to the complete transcript in Definition~\ref{def:dp}.

Let $\mathcal M_K(\mathcal P)$ denote the transcript through an odd iteration $K$, and let $p_{\mathcal P}^{K}(o)$ denote its joint density at a possible transcript $o$. Define $p_{\mathcal P'}^{K}(o)$ similarly. When the preceding messages are fixed, the initialization and update rules determine the current internal states under each instance. The fresh perturbations are independent of those messages and have strictly positive Laplace densities.

For a scalar message $z=u+\eta$, where
$\eta\sim\operatorname{Lap}(b)$, its density is
$p(z\mid u)=(2b)^{-1}\exp(-|z-u|/b)$.
Thus, for two internal values $u$ and $v$ and the same scale $b$,
the triangle inequality gives
\[
\frac{p(z\mid u)}{p(z\mid v)}
=\exp\!\left(\frac{|z-v|-|z-u|}{b}\right)
\le\exp\!\left(\frac{|u-v|}{b}\right).
\]

Applying this inequality to each coordinate of the two messages transmitted by agent $i$ bounds the conditional log-density ratio at communication iteration $k$ by
\[
\frac{\|\Delta x^k\|_1}{d_{x,i}q_i^{k-1}}
+\frac{\|\Delta y^k\|_1}{d_{y,i}q_i^{k-1}}.
\]
Under the same preceding transcript, every other agent has identical internal states in the two instances. Its conditional message densities therefore cancel in the ratio. The joint transcript density is the product of the conditional message densities over communication iterations. Taking logarithms results in
\[
\log\frac{p_{\mathcal P}^{K}(o)}
              {p_{\mathcal P'}^{K}(o)}
\le
\sum_{\substack{k\in\mathcal C\\k\le K}}
\left(
\frac{\|\Delta x^k\|_1}{d_{x,i}q_i^{k-1}}
+\frac{\|\Delta y^k\|_1}{d_{y,i}q_i^{k-1}}
\right).
\]
Using $\|v\|_1\le\sqrt d\,\|v\|_2$ and the sensitivity bound \eqref{eq:communication_sensitivity}, we obtain
\begin{align*}
\log\frac{p_{\mathcal P}^{K}(o)}
              {p_{\mathcal P'}^{K}(o)}
\le{}&\sqrt d\,\delta_i
\begin{bmatrix}d_{x,i}^{-1}&d_{y,i}^{-1}\end{bmatrix}\\
&\times
\sum_{\substack{k\in\mathcal C\\k\le K}}
\left(\frac{B(\alpha)}{q_i^2}\right)^{(k-1)/2}b_\alpha.
\end{align*}
Since $B(\alpha)$ is nonnegative and $\rho(B(\alpha)/q_i^2)<1$, the matrix geometric series converges. Hence,
\[
\sum_{\substack{k\in\mathcal C\\k\le K}}
\left(\frac{B(\alpha)}{q_i^2}\right)^{(k-1)/2}b_\alpha
\le
\left(I-\frac{B(\alpha)}{q_i^2}\right)^{-1}b_\alpha
\]
componentwise. By \eqref{eq:privacy_matrix_budget}, this yields
$
p_{\mathcal P}^{K}(o)
\le e^{\epsilon_i}p_{\mathcal P'}^{K}(o)
$
for every finite transcript $o$, with a bound independent
of $K$.

Integrating over any measurable set $\mathcal O_K$ of finite transcripts gives
$
\Pr[\mathcal M_K(\mathcal P)\in\mathcal O_K]
\le e^{\epsilon_i}
\Pr[\mathcal M_K(\mathcal P')\in\mathcal O_K].
$
Thus, the required privacy inequality holds for every event determined by finitely many communication rounds.

The same bound holds for every finite transcript, independently of its length, and therefore extends to the complete infinite transcript. Hence, the algorithm is $\epsilon_i$-differentially private for changes at agent $i$.
\end{proof}

\subsection{Joint Parameter Conditions and Interpretation}

For $t=\alpha L>0$, the spectral radius of the sensitivity matrix is
\[
\rho(B(\alpha))
=\frac{2t+t^2+\sqrt{(2t+t^2)^2+4t(1+t)}}{2}.
\]
In particular,
$
0<\alpha L<\frac{\sqrt{17}-3}{4}
$
is equivalent to $\rho(B(\alpha))<1$ and ensures that an admissible
$q_i\in(\sqrt{\rho(B(\alpha))},1)$ exists. Indeed, the characteristic polynomial is
$
z^2-(2t+t^2)z-t(1+t),
$
whose positive root is less than one when $1-3t-2t^2>0$. Hence, a joint sufficient parameter choice is
\[
\begin{gathered}
0<\alpha<\min\left\{\frac{2\mu}{L^2},
\frac{\sqrt{17}-3}{4L}\right\},\, \rho(R(\alpha))<1,\\
\rho(B(\alpha))<q_i^2<1,\, d_{x,i}>0,\, d_{y,i}>0
\quad(i\in\mathcal V).
\end{gathered}
\]

These conditions are sufficient to guarantee both convergence
and differential privacy.




\section{Simulation Studies}
\label{sec:simulations}
We compare the proposed algorithm with method~\cite{ding2022state} on a five-agent ring. Following the sensing example in~\cite{ding2022state}, we consider $f_i(x)=\|M_i x-v_i\|^2$, where $x\in\mathbb R^{10}$ and $M_i\in\mathbb R^{10\times10}$ is a randomly generated nonsingular measurement matrix. The measurement vector is given by $v_i=M_i x_{\mathrm{true}}+e_i$, where $x_{\mathrm{true}}$ is the unknown parameter and $e_i$ is measurement noise.

We calibrate the noise scales of the two methods separately to give both methods the same certified infinite-horizon privacy budget $\epsilon_i=1$, with adjacency radius $\delta_i=0.01$, and display the first 700 communication rounds. For the proposed method, the noise scales decay as $(0.9)^{k-1}$ at communication iterations. Method~\cite{ding2022state} uses a decay factor of $0.9$ per communication round.

We consider two protocols: a common constant stepsize $\alpha=0.0031$, and separately tuned stepsizes selected by grid search followed by local refinement to minimize the expected MSE. The tuned stepsizes are approximately $0.0031$ for the proposed method and $0.0057$ for method in~\cite{ding2022state}. All candidates satisfy the corresponding sufficient convergence and privacy conditions, and noise scales are recalibrated for each candidate.

Performance is measured by
$
\mathcal E(r)
=
\frac{1}{5}\sum_{i=1}^{5}
\mathbb E\!\left[\|x_i(r)-x^\star\|_2^2\right],
$
where $x_i(r)$ is the state after communication round $r$ and the expectation is over the privacy perturbations. Figure~\ref{fig:sim_eps1} shows the exact expected MSE for the quadratic problem, validated using 500 Monte Carlo runs.

As shown in Figure~\ref{fig:sim_eps1}, The proposed method reduces the MSE faster per communication round and achieves lower MSE under both stepsize protocols at equal certified privacy budgets in the tested setting. All stepsizes accuracy satisfy the corresponding sufficient theoretical conditions.
\begin{figure}[t]
    \centering
    \includegraphics[width=\columnwidth]{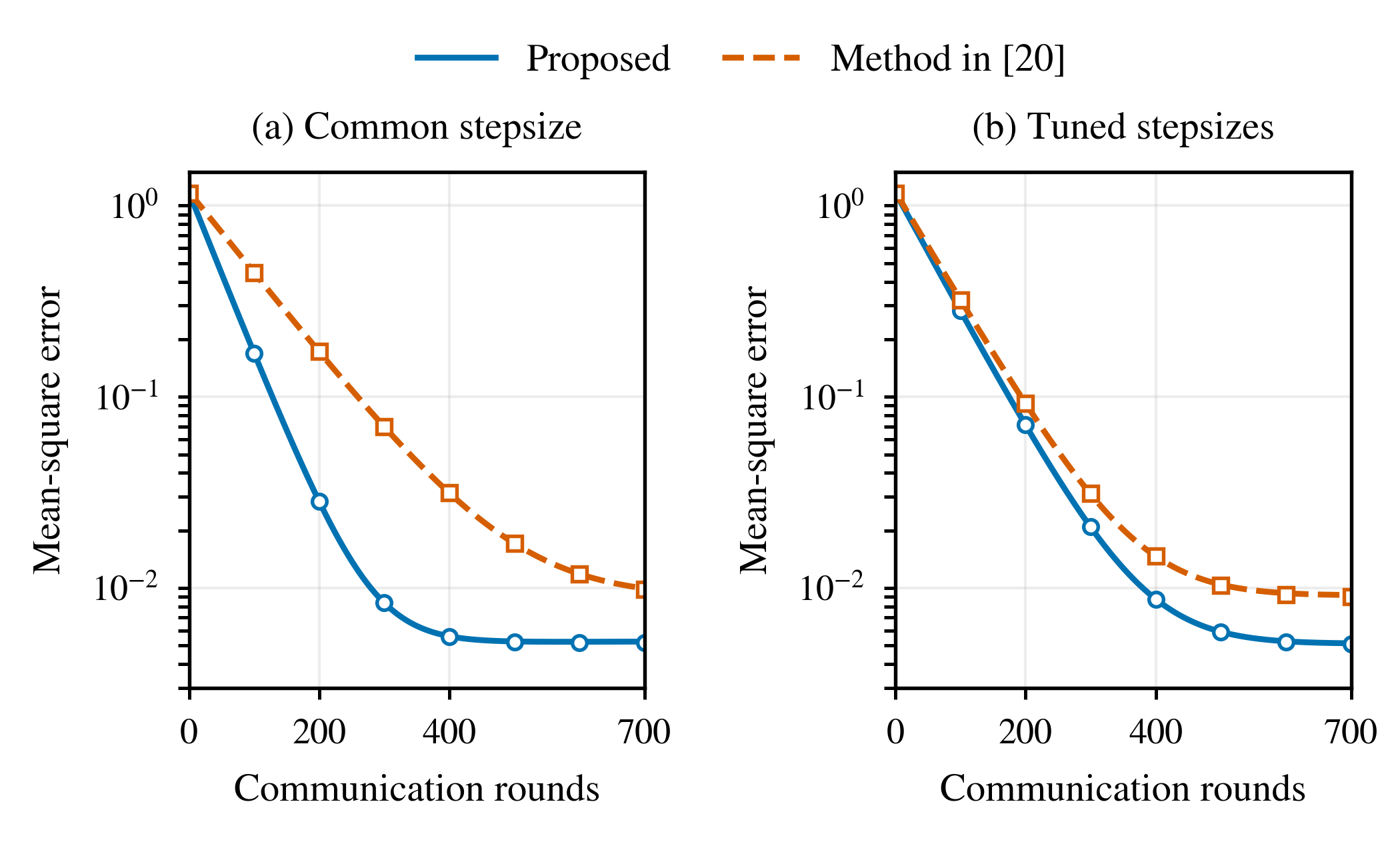}
    \caption{MSE comparison at equal infinite-horizon privacy budget: (a) common stepsize; (b) separately tuned admissible stepsizes. Lines show expected MSE; markers show averages of 500 Monte Carlo runs.}
    \label{fig:sim_eps1}
\end{figure}

\section{Conclusions}
We proposed a privacy-preserving gradient-tracking algorithm with one local update between communication rounds. By perturbing the communicated states and tracking directions, the method achieves infinite-horizon differential privacy under the stated adjacency relation. We established convergence in mean to a random consensus point
under sufficient parameter conditions. Simulations compared differentially private gradient tracking with and without local updates, illustrating the benefits of an additional local update under the same certified privacy budget.
\bibliographystyle{IEEEtran}
\bibliography{reference}

\end{document}